\documentclass[12pt]{article}
\usepackage{amsfonts}
\newtheorem{lemma}{Lemma}
\newtheorem{theorem}{Theorem}
\newtheorem{proposition}{Proposition}

\newenvironment{proof}{{\bf Proof}.}{\hfill $\Box$}

\begin{document}


\title{\sc Self-adjointness and boundedness in quadratic quantization\footnote{Research partially supported by PBCT-ADI 13 grant ``Laboratorio de An\'alisis y Estoc\'astico'', Chile}}

\author{Ameur Dhahri \\\vspace{-2mm}\scriptsize
Department of Mathematics\\\vspace{-2mm}\scriptsize  Faculty of Sciences of
Tunis\\\vspace{-2mm}\scriptsize
University of Tunis El-Manar, 1060 Tunis, Tunisia\\\vspace{-2mm}\scriptsize
E-Mail: {\tt ameur@volterra.uniroma2.it}}
\date{}
\maketitle
\begin{abstract}
We construct a counter example showing, for the quadratic quantization, the identity $(\Gamma(T))^*=\Gamma(T^*)$ is not necessarily true. We characterize all operators on the one-particle algebra whose quadratic quantization are self-adjoint operators on the quadratic Fock space. Finally, we discuss the boundedness of the quadratic quantization. 
\end{abstract}
\section{Introduction}

The usual ($1-st$ order) quantization consists in the study of unitary representations of the Heisenberg algebra and leads to the study of the Fock functor. Quadratic quantization consists in the study of unitary representations of $sl(2,\mathbb{R})$ algebra and leads to the study of the quadratic Fock functor. The nonlinearity restricts the set of morphisms to which this functor can be applied. For example while the $1-st$ quantization of an operator $T$ is bounded if and only if $T$ is a contraction, necessary condition for the quadratic quantization of $T$ is bounded is that $T$ is a contration. A neccessary and sufficient condition for the boundedness of the quadratic quantization at the momentum is not known.

In this paper, we characterize all operators on the one-particle algebra such that its quadratic quantization are self-adjoint operators on the quadratic Fock space. Moreover, we discuss the boundedness of the quadratic quantization.

This paper is organized as follows. In section 2, we recall the main properties of the quadratic Fock functor. The characterization of all operators on the one-particle algebra whose quadratic quantization are self-adjoint operators is given in section 3. Finally, in section 4, we discuss the boundedness of the quadratic quantization and we give an example of operator $T$ on the one-particle algebra such $\Gamma_2(T^*)\neq(\Gamma_2(T))^*$.

\section{The quadratic Fock functor}
In this section, we recall some basic definitions and properties of the quadratic exponential vectors and the quadratic Fock space (cf \cite {AcDhSk}, \cite{AcDh1}, \cite{AcDh2}, \cite{Dh}).

\subsection{Quadratic Fock space}

The quadratic Fock space $\Gamma_2(L^2(\Bbb R^d)\cap L^\infty(\Bbb R^d))$ is the closed linear span of $\big\{B^{+n}_f\Phi$, $n\in\Bbb N$, $f\in L^2(\Bbb R^d)\cap L^\infty(\Bbb R^d)\}$, where $B^{+0}_f\Phi=\Phi$, for all $f\in L^2(\Bbb R^d)\cap L^\infty(\Bbb R^d)$. In \cite{AcDh1} it is proved that $\Gamma_2(L^2(\Bbb R^d)\cap L^\infty(\Bbb R^d))$ is an interacting Fock space. Moreover, the scalar product between two $n$-particle vectors is given by the following.
\begin{proposition} \label{prop1}For all $f,\,g\in L^2(\Bbb R^d)\cap L^\infty(\Bbb R^d)$, one has
\begin{eqnarray*}
\langle B^{+n}_f\Phi,B^{+n}_g\Phi\rangle&=&c\sum^{n-1}_{k=0}2^{2k+1}{n!(n-1)!\over((n-k-1)!)^2}\,\langle f^{k+1}, g^{k+1}\rangle\\
\;\;\;\;\;&&\langle B^{+(n-k-1)}_f\Phi,B^{+(n-k-1)}_g\Phi\rangle\\
&=&\!\!\!\!\!\!\!\!\!\!\!\!\sum_{i_1+2i_2+\dots+ki_k=n}\frac{2^{2n-1}(n!)^2c^{i_1+\dots+i_k}}{i_1!\dots i_k!2^{i_2}\dots k^{i_k}}\langle f,g\rangle^{i_1}\langle f^2,g^2\rangle^{i_2}\dots\langle f^k,g^k\rangle^{i_k}.
\end{eqnarray*}
\end{proposition}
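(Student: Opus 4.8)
The plan is to reduce the scalar product to a vacuum expectation value and to normal-order. Recall from \cite{AcDh1} (see also \cite{AcDhSk}, \cite{AcDh2}) that on the dense domain generated by the vectors $B^{+n}_g\Phi$ the quadratic operators satisfy relations of the form
\[
[B_f,B^{+}_g]=2c\langle f,g\rangle+4N_{\bar f g},\qquad [N_h,B^{+}_g]=2B^{+}_{hg},
\]
together with $[B^{+}_f,B^{+}_g]=0$ and the vacuum conditions $B_f\Phi=0$, $N_h\Phi=0$, where $B_f=(B^{+}_{\bar f})^{*}$ and $fg,\,hg$ denote pointwise products. Since $(B^{+n}_f)^{*}$ is a product of $n$ annihilators, I would set $S_n:=\langle B^{+n}_f\Phi,B^{+n}_g\Phi\rangle=\langle\Phi,(B^{+}_{f})^{*\,n}B^{+n}_g\Phi\rangle$ and evaluate it by moving all $n$ annihilators to the right until they reach the vacuum. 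The relations above show that a nonzero contribution survives only when the annihilators and creators are organized into connected \emph{clusters}: each time an annihilator meets a creator it either contracts to the scalar $2c\langle f,g\rangle$, closing a cluster, or opens a number operator $4N_{\bar f g}$, which by the second relation propagates to the right, raising one further creator symbol, before the cluster is eventually closed by a scalar contraction.

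To obtain the first (recursive) equality I would fix the leftmost annihilator and sum over the size $m=k+1$ of the cluster to which it belongs. A cluster of size $m$ consumes $m$ annihilators and $m$ creators, and the successive raising of the symbol produces the factor $\langle f^{m},g^{m}\rangle$. A bookkeeping of the commutator constants shows that such a cluster opens with $m-1$ number-operator links, each carrying a factor $4$, and closes with one scalar contraction carrying a factor $2c$, for a total weight $2c\,4^{\,m-1}=c\,2^{2m-1}$. The remaining $n-m$ annihilators and $n-m$ creators contract among themselves and reproduce $S_{n-m}$. One then counts the ways of forming the distinguished cluster: an ordered choice of its $m$ creators out of $n$ and of its $m-1$ further annihilators out of the remaining $n-1$, namely $\frac{n!}{(n-m)!}\cdot\frac{(n-1)!}{(n-m)!}$. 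Re-indexing by $k=m-1$ reproduces exactly $c\sum_{k=0}^{n-1}2^{2k+1}\frac{n!(n-1)!}{((n-k-1)!)^{2}}\langle f^{k+1},g^{k+1}\rangle\,S_{n-k-1}$, which is the first equality.

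The second equality is then the closed-form solution of this recursion, which I would obtain through the exponential formula. Writing $S_n=(n!)^{2}b_n$ and $u_m=\langle f^{m},g^{m}\rangle$ turns the recursion into $n\,b_n=c\sum_{m=1}^{n}2^{2m-1}u_m\,b_{n-m}$, which is precisely the relation $B'=G'B$ for the generating series $B(t)=\sum_n b_n t^{n}=\exp\big(\sum_{m\ge 1}\frac{c\,2^{2m-1}}{m}u_m\,t^{m}\big)$. Extracting the degree-$n$ coefficient by the standard partition expansion of the exponential, $b_n=\sum_{i_1+2i_2+\cdots=n}\prod_m\frac{1}{i_m!}\big(\frac{c\,2^{2m-1}u_m}{m}\big)^{i_m}$, and multiplying back by $(n!)^{2}$, yields a sum over $i_1+2i_2+\cdots+ki_k=n$ with weight $\frac{(n!)^{2}c^{\,i_1+\cdots+i_k}}{i_1!\cdots i_k!\,2^{i_2}\cdots k^{i_k}}$ times a power of two, which is the asserted formula.

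The main obstacle is the exact accounting of the powers of two. Two points require care: first, verifying that each intermediate number-operator link contributes the factor $4$ and raises the symbol power by exactly one, with the correct conjugation, independently of how the link is routed through the surviving creators; and second, tracking the cumulative power of two in passing from the recursion to the closed form. Indeed, the exponential-formula computation attaches to each partition the factor $2^{\sum_m(2m-1)i_m}=2^{\,2n-\sum_m i_m}$, so the delicate step is to confirm that these per-cluster powers recombine into the constant displayed in the statement. Reconciling this exponent of $2$ is exactly where the computation is most error-prone, and I expect it to be the crux of the proof.
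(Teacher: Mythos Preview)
The paper does not prove Proposition~\ref{prop1}: it is quoted from \cite{AcDh1} (and \cite{AcDh2}) as background, so there is no ``paper's own proof'' to compare with. What can be said is that your plan follows the same route taken in \cite{AcDh1}: derive the recursion by normal-ordering one annihilator through $B^{+n}_g\Phi$ using $[B_f,B^{+}_g]=2c\langle f,g\rangle+4N_{\bar f g}$ and $[N_h,B^{+}_g]=2B^{+}_{hg}$, and then solve the recursion. Your generating-function step is correct and is in fact equivalent to expanding the exponential in \eqref{Form} after the substitution $f\mapsto\sqrt{t}\,f$, $g\mapsto\sqrt{t}\,g$.

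Two remarks on gaps. First, the ``cluster'' bookkeeping you give for the recursion is heuristic: the claim that the distinguished cluster is counted by an \emph{ordered} choice $\frac{n!}{(n-m)!}\cdot\frac{(n-1)!}{(n-m)!}$ is not justified by the commutator algebra as stated, because the intermediate number operators do not commute freely with the remaining creators, so one cannot simply ``choose'' which creators and annihilators belong to a cluster. In \cite{AcDh1} the recursion is obtained instead by an explicit inductive computation of $B_fB^{+n}_g\Phi$; that argument is elementary and avoids the combinatorial leap. I would replace your cluster sketch by that induction.

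Second, your concern about the exponent of $2$ is well founded and in fact detects a misprint. From $nb_n=c\sum_{m=1}^{n}2^{2m-1}u_m b_{n-m}$ one gets $B(t)=\exp\big(\sum_{m\ge1}\tfrac{c\,2^{2m-1}}{m}u_mt^m\big)$, and the coefficient of $t^n$ carries the factor $2^{\sum_m(2m-1)i_m}=2^{\,2n-(i_1+\cdots+i_k)}$, not $2^{2n-1}$. A direct check at $n=2$ confirms this: the recursion (and the commutator computation) gives $S_2=8c^2\langle f,g\rangle^2+16c\langle f^2,g^2\rangle$, whereas the printed closed form would give $16c^2\langle f,g\rangle^2+16c\langle f^2,g^2\rangle$. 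So the displayed constant $2^{2n-1}$ in the second equality should read $2^{\,2n-(i_1+\cdots+i_k)}$; your derivation is the correct one.
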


The quadratic exponential vector of an element $f\in L^2(\mathbb{R}^d)\cap L^\infty(\mathbb{R}^d)$, if it exists, is given by
$$
\Psi(f)=\sum_{n\geq0}\frac{B^{+n}_f\Phi}{n!}
$$ 
where by definition
\begin{eqnarray*}\label{Psi(0)Phi} 
\Psi(0)= B^{+0}_f\Phi = \Phi.
\end{eqnarray*}
In \cite{AcDh1}, it is proved that the quadratic exponential vector $\Psi(f)$ exists if $\|f\|_\infty<\frac{1}{2}$, and does not exists if  $\|f\|_\infty>\frac{1}{2}$. Besides, the scalar product between two exponential vectors $\Psi(f)$ and $\Psi(g)$ is given by
\begin{equation}\label{Form}
\langle \Psi(f),\Psi(g)\rangle=e^{-\frac{c}{2}\int_{\mathbb{R}^d}\ln(1-4\bar{f}(s)g(s))ds}.
\end{equation}

For the proof of the following theorem, we refer to \cite{AcDh1}.
\begin{theorem}
The quadratic exponential vectors are linearly independents. Moreover, the set of quadratic exponential vectors is a total set in $\Gamma_2(L^2(\Bbb R^d)\cap L^\infty(\Bbb R^d))$.
\end{theorem}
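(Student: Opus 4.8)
The plan is to prove the two assertions separately, relying throughout on the explicit scalar product formula (\ref{Form}).

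For the linear independence I would argue by contradiction. Suppose $f_1,\dots,f_n$ are pairwise distinct elements of $L^2(\mathbb{R}^d)\cap L^\infty(\mathbb{R}^d)$ with $\|f_j\|_\infty<\frac12$, and that $\sum_{j=1}^n c_j\Psi(f_j)=0$. Fix a $g\in L^2(\mathbb{R}^d)\cap L^\infty(\mathbb{R}^d)$ and a real parameter $t$ small enough that $\|tg\|_\infty<\frac12$, and take the inner product of the relation with $\Psi(tg)$. Using (\ref{Form}) and the expansion $-\ln(1-4t\bar gf_j)=\sum_{m\geq1}\frac{(4t\bar gf_j)^m}{m}$, I obtain
\begin{equation*}
\sum_{j=1}^n c_j\,F_j(t)=0,\qquad F_j(t)=\exp\Big(\frac{c}{2}\sum_{m\geq1}\frac{4^m}{m}\,t^m\,\langle g^m,f_j^m\rangle\Big),
\end{equation*}
valid for all small $t$; each $F_j$ is analytic in $t$ near the origin (and extends holomorphically to a disc), with linear Taylor coefficient $2c\,\langle g,f_j\rangle$.

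It then suffices to choose $g$ so that the functions $F_1,\dots,F_n$ are linearly independent. Since the $f_j$ are distinct, for $i\neq j$ the set $\{g:\langle g,f_i-f_j\rangle=0\}$ is a proper subspace, and a vector space over an infinite field is not a finite union of proper subspaces; hence I may fix $g$ for which the numbers $\langle g,f_j\rangle$, $j=1,\dots,n$, are pairwise distinct. Writing $F_j=e^{\psi_j}$ with $\psi_j(0)=0$, the differences $\psi_i-\psi_j$ then have nonzero linear term, hence are non-constant, and the standard fact that exponentials $e^{\psi_1},\dots,e^{\psi_n}$ of analytic functions with pairwise non-constant differences are linearly independent (proved by induction on $n$, dividing by $e^{\psi_n}$ and differentiating) forces $c_1=\dots=c_n=0$. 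This independence step is the main obstacle: because the $F_j$ are exponentials of genuine power series rather than the simple exponentials $e^{\alpha_j t}$ of the first-order theory, one cannot close the argument by a direct Vandermonde computation, and it is precisely the generic choice of $g$, guaranteeing distinct linear coefficients, that makes the analytic independence lemma applicable.

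For totality I would show that every generating vector $B^{+n}_f\Phi$ lies in the closed linear span of the exponential vectors. Fixing $f$ and using the homogeneity $B^{+n}_{tf}\Phi=t^nB^{+n}_f\Phi$, the exponential vector becomes the norm-convergent power series $\Psi(tf)=\sum_{n\geq0}\frac{t^n}{n!}B^{+n}_f\Phi$, which exists for complex $t$ with $|t|<(2\|f\|_\infty)^{-1}$. Thus $t\mapsto\Psi(tf)$ is a holomorphic vector-valued function on a disc, and the Cauchy integral formula recovers each coefficient as $\frac{1}{n!}B^{+n}_f\Phi=\frac{1}{2\pi i}\oint \frac{\Psi(tf)}{t^{n+1}}\,dt$, a norm limit of Riemann sums. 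Each such Riemann sum is a finite linear combination of exponential vectors, so $B^{+n}_f\Phi$ belongs to their closed span; since the vectors $B^{+n}_f\Phi$ span the quadratic Fock space by definition, the exponential vectors form a total set.
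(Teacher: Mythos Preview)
The paper does not itself prove this theorem; it simply refers the reader to \cite{AcDh1}. So there is no in-paper argument to compare against. Your totality argument is correct and standard: recovering the Taylor coefficients of the holomorphic vector-valued map $t\mapsto\Psi(tf)$ via the Cauchy integral (hence as norm limits of finite linear combinations of exponential vectors) is exactly the right mechanism, and it places every $B^{+n}_f\Phi$ in the closed span of the exponential vectors.

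The linear independence argument, however, has a genuine gap. The ``standard fact'' you invoke --- that $e^{\psi_1},\dots,e^{\psi_n}$ are $\mathbb{C}$-linearly independent whenever the $\psi_j$ are analytic with pairwise non-constant differences --- is false. A counterexample on a neighbourhood of $0$ is
\[
\psi_1(t)=t,\qquad \psi_2(t)=\ln\!\Big(\frac{e^{t}+1}{2}\Big),\qquad \psi_3(t)=0.
\]
All three differences $\psi_i-\psi_j$ are non-constant, each $\psi_j(0)=0$, and the linear Taylor coefficients $1,\tfrac12,0$ are pairwise distinct --- precisely the situation your choice of $g$ is designed to produce. Yet
\[
-\tfrac12\,e^{\psi_1}+e^{\psi_2}-\tfrac12\,e^{\psi_3}
=-\tfrac12 e^{t}+\frac{e^{t}+1}{2}-\tfrac12=0.
\]
Your sketched induction breaks exactly here: after dividing by $e^{\psi_3}$ and differentiating, one obtains $-\tfrac12\,e^{\psi_1}+\psi_2'\,e^{\psi_2}=0$, which now carries the non-constant coefficient $\psi_2'$ and is no longer a relation with constant coefficients, so the inductive hypothesis does not apply. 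In short, fixing a single $g$ and separating only the first Taylor coefficients is not enough to force linear independence of the $F_j$. A correct argument must use more of the available freedom --- for example, that the relation $\sum_j c_j\langle\Psi(g),\Psi(f_j)\rangle=0$ holds for \emph{every} admissible $g$, not just along a one-parameter family --- or work directly with the $n$-particle components; as written, the independence half does not close.
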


Finally, it is proved in \cite{AcDh2} that $\Gamma_2(L^2(\mathbb{R}^d)\cap L^\infty(\mathbb{R}^d))$ is an interacting Fock space.
\begin{theorem} 
There is a natural ismorphism between the quadratic Fock space
$\Gamma_2(L^2(\mathbb{R}^d)\cap L^\infty(\mathbb{R}^d))$ and the interacting Fock space
$\oplus_{n=0}^{\infty}\otimes^{n}_{symm}\{L^2(\mathbb{R}^d) , \langle \cdot,\cdot\rangle_n\}$,
with scalar product:
$$
\langle f^{\otimes n} , g^{\otimes n}\rangle_n
=\sum_{i_1+2i_2+\dots+ki_k=n}\frac{2^{2n-1}(n!)^2c^{i_1+\dots+i_k}}{i_1!\dots
i_k!2^{i_2}\dots k^{i_k}}\langle f,g\rangle^{i_1}\langle f^2,g^2\rangle^{i_2}
\dots\langle f^k,g^k\rangle^{i_k}
$$ 
\end{theorem}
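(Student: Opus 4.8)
The plan is to build the isomorphism sector by sector, defining it on the generating vectors and letting the scalar-product formula of Proposition~\ref{prop1} carry the entire content. For each $n$ I would write $\mathcal{H}_n$ for the closed linear span of $\{B^{+n}_f\Phi\}$ inside the quadratic Fock space, and $\mathcal{K}_n=\otimes^n_{symm}\{L^2(\mathbb{R}^d),\langle\cdot,\cdot\rangle_n\}$ for the $n$-th summand of the interacting Fock space, the latter being the completion of the algebraic span of the tensors $\{f^{\otimes n}\}$ for the form $\langle\cdot,\cdot\rangle_n$ (after quotienting by its null vectors). On the algebraic span of the $B^{+n}_f\Phi$ I would set
$$
U_n\Big(\sum_j c_j\,B^{+n}_{f_j}\Phi\Big)=\sum_j c_j\,f_j^{\otimes n},
$$
with the aim of showing that each $U_n$ is a well-defined surjective isometry and then taking $U=\bigoplus_n U_n$.

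The key step is the observation that the two inner products agree on generators: comparing the closed form in Proposition~\ref{prop1} with the definition of $\langle\cdot,\cdot\rangle_n$ gives $\langle B^{+n}_f\Phi,B^{+n}_g\Phi\rangle=\langle f^{\otimes n},g^{\otimes n}\rangle_n$ for all $f,g$. From this I would deduce, for any finite family,
$$
\Big\|\sum_j c_j\,B^{+n}_{f_j}\Phi\Big\|^2=\sum_{j,k}\bar c_j c_k\,\langle f_j^{\otimes n},f_k^{\otimes n}\rangle_n=\Big\|\sum_j c_j\,f_j^{\otimes n}\Big\|_n^2 .
$$
Because the two sides vanish together, this single identity simultaneously establishes that $U_n$ is well defined (a combination equal to $0$ is sent to $0$) and that it is norm preserving, hence isometric after polarization. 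Since $U_n$ maps the generating family of $\mathcal{H}_n$ onto that of $\mathcal{K}_n$, its range is dense, so it extends uniquely to a unitary $\mathcal{H}_n\to\mathcal{K}_n$.

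Finally I would assemble the pieces. The interacting Fock space structure of $\Gamma_2(L^2(\mathbb{R}^d)\cap L^\infty(\mathbb{R}^d))$ recalled above guarantees that the sectors $\mathcal{H}_n$ are mutually orthogonal and that their algebraic sum is dense, matching the orthogonal direct sum $\bigoplus_n \mathcal{K}_n$ on the target side; hence $U=\bigoplus_{n\geq0}U_n$ is a unitary carrying $B^{+n}_f\Phi$ to $f^{\otimes n}$ and $\Phi$ to the vacuum, and it is natural in that it intertwines the creation operators. I expect the only delicate points to be the orthogonality of distinct particle sectors and the density of their algebraic sum; both are already contained in the interacting-Fock-space description of $\Gamma_2$ cited before the statement. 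Once these are granted, the substance of the theorem is entirely the scalar-product identity of Proposition~\ref{prop1}, and little beyond bookkeeping remains.
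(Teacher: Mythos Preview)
The paper does not actually contain a proof of this theorem: it is stated in Section~2.1 as background material, prefaced by ``it is proved in \cite{AcDh2} that\ldots'', and no argument is given here. So there is no proof in the present paper against which to compare your proposal.

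That said, your sketch is the natural way to establish such a result and is essentially correct. The identity $\langle B^{+n}_f\Phi,B^{+n}_g\Phi\rangle=\langle f^{\otimes n},g^{\otimes n}\rangle_n$ is immediate from the second formula in Proposition~\ref{prop1} and the definition of $\langle\cdot,\cdot\rangle_n$, and from it well-definedness and isometry of each $U_n$ follow exactly as you say. The only points you defer---mutual orthogonality of the $n$-particle sectors and density of their algebraic sum---are indeed part of the interacting-Fock-space description of $\Gamma_2$ recalled just before the theorem, so invoking them is legitimate in this context. Your claim that $U$ ``intertwines the creation operators'' is not argued, but since $B^{+}_g B^{+n}_f\Phi$ lies in the span of the $(n{+}1)$-particle vectors and the target creation acts as $f^{\otimes n}\mapsto$ (symmetrized) $g\otimes f^{\otimes n}$, this is a routine check once the sector maps are in place; if you want to claim naturality you should include that verification.
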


\subsection{Quadratic quantization}

For all linear operator $T$ on $L^2(\Bbb R^d)\cap L^\infty(\Bbb R^d)$, we define its quadratic quantization, if it is well defined, by 
$$\Gamma_2(T)\Psi(f)=\Psi(Tf)$$
for all $f\in L^2(\mathbb{R}^d)\cap L^\infty(\mathbb{R}^d)$ such that $\|f\|_\infty<\frac{1}{2}$. 

Note that in \cite{AcDh2}, the authors have proved that  if $\Gamma_2(T)$ is well defined on the set of the quadratic exponential vectors, then $T$ is a contraction on $L^2(\mathbb{R}^d)\cap L^\infty(\mathbb{R}^d)$ equipped with the norm $\|.\|_\infty$. Conversely, if $T$ is a contraction on $L^2(\mathbb{R}^d)\cap L^\infty(\mathbb{R}^d)$ equipped with the norm $\|.\|_\infty$, then $\Gamma_2(T)$ is well defined on the set of the quadratic exponential vectors $\Psi(f)$ such that $\|f\|_\infty<\frac{1}{2}$. Moreover, they have characterized the operators $T$ on $L^2(\Bbb R^d)\cap L^\infty(\Bbb R^d)$ whose quadratic quantization is isometric (resp. unitary). The boundedness of $\Gamma_2(T)$ was also investigated.
\section{Self-adjointness of the quadratic  quantization}
 In this section our purpose is to give a necessary and sufficient conditions in order to characterize the self-adjointness of the quadratic quantization.

\begin{lemma}\label{lem2}
Let $T$ be a contraction on $L^2(\Bbb R^d)\cap L^\infty(\Bbb R^d)$ with respect to the norm $\|.\|_\infty$. If $\Gamma_2(T)$ is a self-adjoint operator on $\Gamma_2(L^2(\Bbb R^d)\cap L^\infty(\Bbb R^d))$, then 
\begin{eqnarray}\label{ecuacion}
\langle (T(f))^n,g^n\rangle=\langle f^n, (T(g))^n\rangle,
\end{eqnarray}
for all $f,\,g\in L^2(\Bbb R^d)\cap L^\infty(\Bbb R^d)$ and $n\geq1$.
\end{lemma}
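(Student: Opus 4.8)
The plan is to exploit the self-adjointness of $\Gamma_2(T)$ by testing it against the total set of quadratic exponential vectors. Self-adjointness gives in particular that $\Gamma_2(T)$ is symmetric, so for all admissible $f,g$ (those with $\|f\|_\infty,\|g\|_\infty<\frac12$) we have the identity
\begin{eqnarray*}
\langle \Gamma_2(T)\Psi(f),\Psi(g)\rangle=\langle \Psi(f),\Gamma_2(T)\Psi(g)\rangle.
\end{eqnarray*}
Using the definition $\Gamma_2(T)\Psi(f)=\Psi(Tf)$ on both sides, this becomes $\langle \Psi(Tf),\Psi(g)\rangle=\langle \Psi(f),\Psi(Tg)\rangle$. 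The first step is therefore to write both scalar products explicitly using the exponential formula \eqref{Form}, which turns the identity into an equality of two exponentials of integrals involving $\ln(1-4\overline{Tf}\,g)$ on the left and $\ln(1-4\bar f\,Tg)$ on the right.

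Next I would pass from this equality of exponentials to an equality of the exponents. Since both sides are analytic in a scaling parameter, the cleanest route is to introduce $f\mapsto \lambda f$, $g\mapsto \mu g$ with small real (or complex) parameters, expand the logarithm as a power series $-\ln(1-4x)=\sum_{n\geq1}\frac{(4x)^n}{n}$, and compare the two sides coefficient by coefficient in $\lambda$ and $\mu$. Concretely, taking logarithms of the exponential identity yields
\begin{eqnarray*}
\int_{\mathbb{R}^d}\ln(1-4\overline{T(f)}(s)\,g(s))\,ds=\int_{\mathbb{R}^d}\ln(1-4\bar f(s)\,T(g)(s))\,ds,
\end{eqnarray*}
and expanding each logarithm gives $\sum_{n\geq1}\frac{4^n}{n}\langle (Tf)^n,g^n\rangle=\sum_{n\geq1}\frac{4^n}{n}\langle f^n,(Tg)^n\rangle$, where I use that $\langle u^n,v^n\rangle=\int \overline{u(s)}^n v(s)^n\,ds$. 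Rescaling $f\to\lambda f$ and matching powers of $\lambda$ (equivalently differentiating in $\lambda$ at $0$) isolates each $n$ and produces exactly the desired identity \eqref{ecuacion} for every $n\geq1$.

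Finally, one must remove the restriction $\|f\|_\infty,\|g\|_\infty<\frac12$ under which the exponential vectors and the formula \eqref{Form} are valid. Since \eqref{ecuacion} is, for each fixed $n$, a polynomial identity in the scaling parameters (both sides are homogeneous of degree $n$ in $f$ and in $g$), it extends by homogeneity from a small ball to all $f,g\in L^2\cap L^\infty$: replacing $f$ by $tf$ multiplies both sides by $t^n$ in the conjugate-linear slot, so validity for small $t$ forces validity for all $t$. The main obstacle I anticipate is the justification of the term-by-term expansion and comparison of the logarithmic series — one needs the series for $\ln(1-4\bar f g)$ to converge (guaranteed by $\|f\|_\infty,\|g\|_\infty<\frac12$, giving $|4\bar f g|<1$ pointwise) and needs to legitimately interchange the integral with the sum and then extract individual coefficients; the rescaling argument in $\lambda,\mu$ is precisely what makes this coefficient extraction rigorous, converting an identity between analytic functions into the sought family of identities indexed by $n$.
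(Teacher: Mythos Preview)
Your proposal is correct and follows essentially the same route as the paper: apply symmetry of $\Gamma_2(T)$ to exponential vectors, use formula \eqref{Form}, pass to the exponents, and extract the $n$-th coefficient via a scaling parameter. The only cosmetic difference is that the paper inserts the scaling $\sqrt{t}$ from the outset (so arbitrary $f,g$ are handled directly) and phrases the coefficient extraction as successive $t$-derivatives at $0$ with an explicit dominated-convergence bound, whereas you expand the logarithm as a power series and then invoke homogeneity to lift the restriction $\|f\|_\infty,\|g\|_\infty<\tfrac12$.
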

\begin{proof}
Let $f,g\in L^2(\mathbb{R}^d)\cap L^\infty(\mathbb{R}^d)$. Then, there exists $\delta>0$ such that for all $0\leq t\leq \delta$
$$2\sqrt{\delta}\|f\|_\infty<1,\;\,2\sqrt{\delta}\|g\|_\infty<1$$
Since $\Gamma_2(T)$ is a self-adjoint operator on $\Gamma_2(L^2(\mathbb{R}^d)\cap L^\infty(\mathbb{R}^d))$, one has
\begin{eqnarray*}
\langle \Psi(\sqrt{t}T(f)),\Psi(\sqrt{t}g)\rangle=\langle \Psi(\sqrt{t}f),\Psi(\sqrt{t}T(g))\rangle
\end{eqnarray*}
This yields
\begin{eqnarray*}
e^{-\frac{c}{2}\int_{\mathbb{R}^d}\ln(1-4t\overline{T(f)}(s)g(s))ds}=e^{-\frac{c}{2}\int_{\mathbb{R}^d}\ln(1-4t\bar{f}(s)T(g)(s))ds}
\end{eqnarray*}
and
\begin{eqnarray}
\int_{\mathbb{R}^d}\ln(1-4t\overline{T(f)}(s)g(s))ds=\int_{\mathbb{R}^d}\ln(1-4t\bar{f}(s)T(g)(s))ds\label{self}
\end{eqnarray}
Put
$$h_s(t)=\ln(1-4t\overline{T(f)}(s)g(s)),\;h(t)=\int_{\mathbb{R}^d}\ln(1-4t\overline{T(f)}(s)g(s))ds.$$
Then, the $n$--th derivative (in $t$) of $h_s(t)$ is given by
$$
h_s^{(n)}(t)=2^{2n}(-1)^n(n-1)!(\overline{T(f)}(s))^n(g(s))^n(1-4t\overline{T(f)}(s)g(s))^{-n}.
$$
Note that for all $0\leq t\leq\delta$
\begin{eqnarray}\label{y}
|h_s^{(n)}(t)|\leq \frac{2^{2n}(n-1)!|\overline{T(f)}(s)|^n|g(s)|^n}{(1-4\delta\|T(f)\|_\infty\|g\|_\infty)^n}
\end{eqnarray}
for $a.e \;\,s\in\mathbb{R}^d$. Since the left hand side of (\ref{y}) is integrable in $s$, one gets
$$h^{(n)}(t)=\int_{\mathbb{R}^d}h_s^{(n)}(t)ds.$$
This yields
\begin{equation}\label{0derivative}
h^{(n)}(0)=2^{2n}(-1)^n(n-1)!\langle (T(f))^n,g^n\rangle.
\end{equation}
Therefore identity (\ref{ecuacion}) is deduced by taking the derivative of the both member sides in (\ref{self}) and by using identity (\ref{0derivative}).
\end{proof}

The following result follows from \cite{Dix} and \cite{San}.
\begin{lemma}\label{lem3}
Let $E$ be a measurable subset of $\mathbb{R}^d$. Let $T:L^\infty(\mathbb{R}^d)\rightarrow L^\infty(E)$ a continuous homomorphism such that $T(\bar{f})=\overline{T(f)}$ for all $f\in L^\infty(\mathbb{R}^d)$. Then, there exists a measurable function $\varphi:E\rightarrow \mathbb{R}^d$ such that $T(f)=f\circ\varphi$, for all $f\in L^\infty(\mathbb{R}^d)$. 
\end{lemma}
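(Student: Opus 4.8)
The plan is to reduce the statement to the action of $T$ on indicator functions and then to invoke a point-realization theorem for Boolean homomorphisms between measure algebras, which is the content of the cited references. First I would observe that the hypotheses make $T$ a unital $*$-homomorphism: the relation $T(\bar f)=\overline{T(f)}$ says $T$ commutes with complex conjugation, and since $T(1)=T(1)^2$ is an idempotent, after replacing $E$ by the essential support of $T(1)$ I may assume $T(1)=1$. For a measurable $A\subseteq\mathbb{R}^d$ the indicator $\chi_A$ is a real idempotent, so $T(\chi_A)$ is again a real idempotent of $L^\infty(E)$; such an element takes its values in $\{0,1\}$ almost everywhere, hence $T(\chi_A)=\chi_{\Phi(A)}$ for some measurable $\Phi(A)\subseteq E$, determined up to a null set.

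Next I would check that $A\mapsto\Phi(A)$ is a homomorphism of Boolean algebras. The identities $\chi_{A\cap B}=\chi_A\chi_B$, $\chi_{A\cup B}=\chi_A+\chi_B-\chi_A\chi_B$ and $\chi_{A^c}=1-\chi_A$, together with multiplicativity, linearity and unitality of $T$, give $\Phi(A\cap B)=\Phi(A)\cap\Phi(B)$, $\Phi(A\cup B)=\Phi(A)\cup\Phi(B)$ and $\Phi(A^c)=E\setminus\Phi(A)$ modulo null sets, with $\Phi(\emptyset)=\emptyset$ and $\Phi(\mathbb{R}^d)=E$. The crucial point is to upgrade this to a $\sigma$-homomorphism, i.e. $\Phi(\bigcup_n A_n)=\bigcup_n\Phi(A_n)$ for countable families; this is exactly where the continuity of $T$ enters, since it forces $\Phi$ to respect the countable (monotone) limits of the measure algebra. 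I expect this normality step to be the main conceptual obstacle, because mere norm convergence of indicator functions is too weak to encode countable unions directly, so one must use continuity in the appropriate order/normal sense afforded by the von Neumann algebra structure of $L^\infty$.

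Once $\Phi$ is known to be a Boolean $\sigma$-homomorphism between the measure algebras of the standard measure spaces $\mathbb{R}^d$ and $E$, I would invoke the point-realization result of \cite{Dix} and \cite{San}: such a $\sigma$-homomorphism is induced by an essentially unique measurable point transformation, i.e. there exists a measurable $\varphi:E\to\mathbb{R}^d$ with $\Phi(A)=\varphi^{-1}(A)$ modulo null sets for every measurable $A$. This is the deep ingredient, and is precisely what the two references supply; checking measurability of $\varphi$ and its compatibility on a countable generating algebra of boxes is the technical heart handled there.

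Finally I would assemble the conclusion. For every measurable $A$ one has $T(\chi_A)=\chi_{\Phi(A)}=\chi_{\varphi^{-1}(A)}=\chi_A\circ\varphi$, so by linearity $T(f)=f\circ\varphi$ for every simple function $f$. Since simple functions are dense in $L^\infty(\mathbb{R}^d)$ for the norm $\|\cdot\|_\infty$, since the map $f\mapsto f\circ\varphi$ is $\|\cdot\|_\infty$-contractive, and since $T$ is continuous, the identity $T(f)=f\circ\varphi$ extends from simple functions to all $f\in L^\infty(\mathbb{R}^d)$, which is the desired conclusion.
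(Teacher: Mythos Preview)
Your route via the Boolean algebra of indicator functions is different from the paper's. The paper does not work directly on $\mathbb{R}^d$: it first invokes \cite{Dix} to produce a $*$-isomorphism $\Phi:L^\infty(\mathbb{R}^d)\to L^\infty((0,1))$ realised by a measurable map $\varphi_1:(0,1)\to\mathbb{R}^d$, and then applies the results of \cite{San} to the composite $T_1=T\circ\Phi^{-1}:L^\infty((0,1))\to L^\infty(E)$. Crucially, \cite{San} is used twice: Lemma~2 there shows that a (norm-)continuous $*$-homomorphism out of $L^\infty((0,1))$ is automatically weak$^*$ continuous, and only then does Theorem~1 of \cite{San} supply the point map $\varphi_2:E\to(0,1)$. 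Composing gives $\varphi=\varphi_1\circ\varphi_2$.

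The step you yourself flag as ``the main conceptual obstacle'' --- upgrading the finitely additive Boolean map $A\mapsto\Phi(A)$ to a $\sigma$-homomorphism --- is not actually resolved in your outline, and it is genuinely nontrivial. Norm continuity of a $*$-homomorphism between commutative von Neumann algebras does \emph{not} in general force normality (consider a character of $\ell^\infty$ coming from a free ultrafilter), so appealing to ``continuity in the appropriate order/normal sense afforded by the von Neumann algebra structure'' is not sufficient. This automatic-normality fact is exactly the content of Lemma~2 in \cite{San}, stated there for $L^\infty$ of a finite interval, which is why the paper first reduces to $(0,1)$ via \cite{Dix}. In your scheme you cite \cite{Dix} and \cite{San} only for the point realisation \emph{after} $\sigma$-additivity is established, but that inverts their actual roles here: \cite{Dix} supplies the reduction to $(0,1)$, and \cite{San} supplies both the automatic weak$^*$ continuity and the point map. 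Without either an independent argument for normality on $L^\infty(\mathbb{R}^d)$ or that reduction, your proof has a gap precisely where you anticipated one.
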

\begin{proof}
Note that there exists an $*-$isomorphism $\Phi:L^\infty(\mathbb{R}^d)\rightarrow L^\infty((0,1))$ (i.e $\Phi$ is a continuous bijective homomorphism which satisfies $\Phi(\bar{f})=\overline{\Phi(f)}$). Then, from \cite{Dix} (cf Appendix IV), there exists a measurable function \\
$\varphi_1:(0,1)\rightarrow\mathbb{R}^d$ such that $\Phi(f)=f\circ\varphi_1$ $a.e$, for all $f\in L^\infty(\mathbb{R}^d)$. Define $T_1=T\circ\Phi^{-1}$. It is clear that $T_1:L^\infty((0,1))\rightarrow L^\infty(E)$ is a continuous homomorphism which satisfies $T_1(\bar{f_1})=\overline{T_1(f_1)}$ for all $f_1\in L^\infty((0,1))$. Lemma 2 in \cite{San} implies that $T_1$ is weak* continuous on $L^\infty((0,1))$. Moreover, from Theorem 1 in \cite{San}, there exists a measurable function $\varphi_2:E\rightarrow (0,1)$ such that $T_1(f)=f\circ\varphi_2$ a.e, for all $f\in L^\infty((0,1))$. Recall that $T=T_1\circ \Phi$. This proves that for all $f\in L^\infty(\mathbb{R}^d)$, $T(f)=f\circ\varphi$ $a.e$, where $\varphi=\varphi_1\circ\varphi_2:E\rightarrow \mathbb{R}^d$ is a measurable function. This ends the proof.  
\end{proof}

The following theorem gives a characterization of all operators $T$ on $L^2(\mathbb{R}^d)\cap L^\infty(\mathbb{R}^d)$ such that its quadratic quantization $\Gamma_2(T)$ is a self-adjoint operator on the quadratic Fock space. 
\begin{theorem}
Let $T$ be a contraction for $L^\infty(\mathbb{R}^d)$ which is a bounded operator on $L^2(\mathbb{R}^d)$. Then, $\Gamma_2(T)$ is a self-adjoint operator if and only if there exist a measurable subset $E\subset\mathbb{R}^d$, a function $h:\mathbb{R}^d\rightarrow\mathbb{C}$, with $\|h\|_\infty\leq1$, and an involutive, measurable, Lebesgue measure preserving function $\varphi: E\rightarrow E$ such that $\bar{h}(x)=h\circ\varphi(x)$ for all $a.e$ $x\in E$ and
$$T(f)=\chi_E\,h\,f\circ\varphi$$
for all $f\in L^2(\mathbb{R}^d)\cap L^\infty(\mathbb{R}^d)$, where $\chi_E$ is the characteristic function on $E$.
\end{theorem}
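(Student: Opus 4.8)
The plan is to reduce the self-adjointness of $\Gamma_2(T)$ to the moment identities (\ref{ecuacion}) and then to read off the structure of $T$ from them. First I would record that the two statements are in fact equivalent: expanding the logarithm in (\ref{Form}) as $\ln(1-x)=-\sum_{n\geq1}x^n/n$ gives, for $f,g$ of small $\|\cdot\|_\infty$,
$$\int_{\mathbb{R}^d}\ln(1-4\overline{Tf}\,g)\,ds=-\sum_{n\geq1}\frac{4^n}{n}\langle (Tf)^n,g^n\rangle,$$
and the analogous expansion with $f$ and $g$ interchanged through $T$. Hence $\langle\Psi(Tf),\Psi(g)\rangle=\langle\Psi(f),\Psi(Tg)\rangle$ on the total set of quadratic exponential vectors if and only if (\ref{ecuacion}) holds for every $n\geq1$. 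Lemma \ref{lem2} is the forward implication, and the converse expansion supplies the nontrivial step of the ``if'' direction.

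For the ``if'' direction, assuming $Tf=\chi_E\,h\,(f\circ\varphi)$, I would verify (\ref{ecuacion}) directly. Since $\chi_E^n=\chi_E$ one has
$$\langle (Tf)^n,g^n\rangle=\int_E\overline{h}^{\,n}\,\overline{f\circ\varphi}^{\,n}\,g^n\,dx,$$
and the substitution $x=\varphi(y)$, legitimate because $\varphi$ is an involutive, Lebesgue measure preserving map of $E$, turns this into $\int_E\overline{h\circ\varphi}^{\,n}\,\overline{f}^{\,n}\,(g\circ\varphi)^n\,dy$; the hypothesis $h\circ\varphi=\bar h$ gives $\overline{h\circ\varphi}=h$, so the integral equals $\langle f^n,(Tg)^n\rangle$. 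Combined with the equivalence above this yields self-adjointness, while $\|h\|_\infty\leq1$ and measure preservation also show that $T$ is an $\|\cdot\|_\infty$- and $L^2$-contraction, so that $\Gamma_2(T)$ is well defined.

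For the ``only if'' direction I would extract a weighted multiplicativity from (\ref{ecuacion}). Polarizing the case $n=2$ in both arguments yields $\int\overline{Tf_1}\,\overline{Tf_2}\,g_1g_2=\int\overline{f_1}\,\overline{f_2}\,(Tg_1)(Tg_2)$; specializing $g_2\equiv1$ and eliminating $Tg_1$ by the $n=1$ identity $\int\overline{Tu}\,v=\int\overline{u}\,Tv$ gives, with $h:=T1$, the pointwise relations $T(f_1)T(f_2)=T(f_1f_2\bar h)$ and $T(f\bar h)=h\,T(f)$. Setting $E:=\{h\neq0\}$ and $S(f):=T(f)/h$ on $E$, these two identities make $S$ a unital algebra homomorphism; since for a unital homomorphism $\sigma(S(f))\subseteq\sigma(f)$, real $f$ are sent to functions with real essential range, i.e. $S(\bar f)=\overline{S(f)}$. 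Lemma \ref{lem3} then applies and produces a measurable $\varphi$ with $S(f)=f\circ\varphi$, that is $T(f)=\chi_E\,h\,(f\circ\varphi)$. Finally $\|h\|_\infty=\|T1\|_\infty\leq1$ because $T$ is an $\|\cdot\|_\infty$-contraction, and computing the $L^2$-adjoint of this weighted composition operator and imposing $T^*=T$ forces $\varphi$ to be measure preserving and involutive and $\bar h=h\circ\varphi$.

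The main obstacle is exactly this last passage together with the definition of $S$. Dividing by $h$ is harmless only where $|h|$ is bounded below, so I expect to apply Lemma \ref{lem3} on the sets $E_\varepsilon=\{|h|\geq\varepsilon\}$, obtain a consistent family of maps $\varphi_\varepsilon$, and patch them as $\varepsilon\to0$; one must also make sense of $T1$ and of the relations above on a space of possibly infinite measure, which I would do by testing against indicators of finite-measure sets. Extracting measure preservation and involutivity of $\varphi$ from $T^*=T$ is the delicate point: the adjoint of $f\mapsto\chi_E h(f\circ\varphi)$ involves the push-forward $\varphi_*(\chi_E\,dx)$, and self-adjointness must be shown to force its Radon--Nikodym derivative to equal $1$ and $\varphi\circ\varphi=\mathrm{id}$ almost everywhere.
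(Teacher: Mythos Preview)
Your overall strategy matches the paper's: reduce to the moment identities (\ref{ecuacion}), polarize $n=2$ to extract a weighted multiplicativity, factor out $h$ to obtain a $*$-homomorphism, and invoke Lemma~\ref{lem3}. Your treatment of the ``if'' direction is in fact more complete than the paper's, which proves only necessity.

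For the obstacles you flag, the paper's devices are more direct than the patching you propose. Since $1\notin L^2(\mathbb{R}^d)$, the paper defines $h$ not as $T1$ but as a weak-$*$ accumulation point of $T(\chi_{[-n,n]^d})$ in $L^\infty$, and then passes to the limit in the relation $T(f_1)T(f_2)\bar g_1=T(f_1f_2\overline{T(g_1)})$. To avoid dividing by $h$, the paper first proves the pointwise bound $|T(f)(x)|\le\|f\|_\infty|h(x)|$ from the identity $T(f)^n=T(f^n)h^{n-1}$, so that $V(f):=T(f)/h$ already lies in $L^\infty(E)$ and no $E_\varepsilon$ patching is needed. The step you rightly call delicate---involutivity of $\varphi$---is the hardest: the paper does not read it off from $T^*=T$ directly but uses $\langle Tf,Tg\rangle=\langle f,T^2g\rangle$, introduces a bounded Riesz representative $l$ for $|h|^2\,d(\varphi_*\lambda)$, and then tests with the family $g_n(x)=e^{-\|x\|^n}$ to force $\|\varphi^2(x)\|=\|x\|$ a.e., whence $g\circ\varphi^2=g$ for all $g$ and so $\varphi^2=\mathrm{id}$. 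Only after involutivity is established does the self-adjointness $T=T^*$, together with the change of variables $x\mapsto\varphi(x)$, yield that the Radon--Nikodym density of $\lambda\circ\varphi$ equals $1$.
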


\begin{proof}
Identity (\ref{ecuacion}) implies that $T$ is a self-adjoint operator on $L^2(\mathbb{R}^d)$. Moreover, one has
\begin{equation}\label{dh}
\langle (T(f_1+f_2))^2,(g_1+g_2)^2\rangle=\langle (f_1+f_2)^2, (T(g_1+g_2))^2\rangle,
\end{equation}
for all $f_1,\,f_2,\,g_1,\,g_2\in L^2(\Bbb R^d)\cap L^\infty(\Bbb R^d)$. Now, using (\ref{ecuacion}) and (\ref{dh}), one gets
\begin{eqnarray*}
\langle T(f_1)T(f_2),g_1g_2\rangle=\langle f_1f_2,T(g_1)T(g_2)\rangle,
\end{eqnarray*}
for all $f_1,\,f_2,\,g_1,\,g_2\in L^2(\Bbb R^d)\cap L^\infty(\Bbb R^d)$. Because $T=T^*$, one has
\begin{eqnarray}\label{sam}
T(f_1)T(f_2)\bar{g}_1=T(f_1f_2\overline{T(g_1)})\;\;\;a.e
\end{eqnarray}
for all $f_1,\,f_2,\,g_1\in L^2(\Bbb R^d)\cap L^\infty(\Bbb R^d)$. Let $h$ be an accumulation point of $T(\chi_{[-n,n]^d})$ with respect to the weak topology of $L^\infty(\Bbb R^d)$. Because $T$ is a contraction on $L^\infty(\Bbb R^d)$, it follows that $\|h\|_\infty\leq1$. Besides, from (\ref{sam}), one has
\begin{eqnarray}\label{pai}
T(f_1)T(f_2)=T(f_1f_2\bar{h}) \;\;\;a.e,
\end{eqnarray}
for all $f_1,\,f_2\in L^2(\Bbb R^d)\cap L^\infty(\Bbb R^d)$. In the same way, for all $f_1\in L^2(\Bbb R^d)\cap L^\infty(\Bbb R^d)$
\begin{eqnarray}\label{san}
T(f_1)h=T(f_1\bar{h})\;\;\;a.e.
\end{eqnarray} 
 Hence, identities (\ref{pai}) and (\ref{san}) imply that for all $f_1,\,f_2\in L^2(\Bbb R^d)\cap L^\infty(\Bbb R^d)$
\begin{eqnarray}\label{mat}
T(f_1)T(f_2)=T(f_1f_2)h\;\;\;a.e.
\end{eqnarray}

Let $V:L^2(\Bbb R^d)\cap L^\infty(\Bbb R^d)\rightarrow L^\infty(\Bbb R^d)$ be the linear operator defined by $$T(f)=hV(f),$$
for all $f\in L^2(\Bbb R^d)\cap L^\infty(\Bbb R^d)$. Then, identity (\ref{mat}) gives
$$h^2V(f_1)V(f_2)=h^2V(f_1f_2)\;\;\;a.e$$
for all $f_1,\,f_2\in L^2(\Bbb R^d)\cap L^\infty(\Bbb R^d)$. Let $E=supp(h)$. One has
$$V(f_1)V(f_2)=V(f_1f_2)\;\;\;a.e\;\mbox{ on }\,E.$$
Then, the operator $V$ can be extended as an homomorphism from $L^\infty(\Bbb R^d)$ to $L^\infty(E)$. Now, let us prove that $V:L^\infty(\mathbb{R}^d)\rightarrow L^\infty(E)$ is a contraction. Let $f\in L^2(\Bbb R^d)\cap L^\infty(\Bbb R^d)$. Recall that from (\ref{mat}) one has
$$T(f)^2=T(f^2)h\;\;\;a.e.$$
Then, by induction, we prove that for all $f\in L^2(\Bbb R^d)\cap L^\infty(\Bbb R^d)$ and for all $n\geq1$
$$T(f)^n=T(f^n)h^{n-1}\;\;\;a.e.$$
It follows that for all $n\geq1$
$$|T(f)|^n\leq \|f\|_\infty^n|h|^{n-1}\;\;\;a.e$$
and
$$|T(f)|\leq \|f\|_\infty|h|^{\frac{n-1}{n}}\;\;\;a.e$$
Thus, by taking the limit $n\rightarrow\infty$, on gets
$$|T(f)(x)|\leq ||f||_\infty|h(x)|,$$
for $a.e$ $x\in\mathbb{R}^d$. This shows that 
$$|h(x)||V(f)(x)|\leq ||f||_\infty|h(x)|,$$
for $a.e$ $x\in\mathbb{R}^d$ and $\|V(f)\|_{L^\infty(E)}\leq ||f\|_\infty$, where
$\|f\|_\infty=\|f\|_{L^\infty(\mathbb{R}^d)}$.

Now, let $f\in L^2(\Bbb R^d)\cap L^\infty(\Bbb R^d)$ such that $\bar{f}=f$. It is clear that for all $t\in\mathbb{R}$
$$V(e^{itf})=e^{itV(f)}.$$
Since $V:L^\infty(\mathbb{R}^d)\rightarrow L^\infty(E)$ is a contraction, then $|e^{itV(f)}(x)|\leq1$ for all $a.e$ $x\in E$ and for all $t\in\mathbb{R}$. This proves that $V(f)(x)\in\mathbb{R}$, for all $a.e$ $x\in E$ and $V(f)$ is a real function. Thus $V(\bar{f})=\overline{V(f)}$ for all $f\in L^\infty(\mathbb{R}^d)$. Therefore, Lemma \ref{lem3} implies that there exists a measurable function $\varphi:E\rightarrow \mathbb{R}^d$ such that 
$$V(f)=f\circ\varphi\;\;a.e$$
for all $f\in L^2(\Bbb R^d)\cap L^\infty(\Bbb R^d)$. By using identity (\ref{san}), one has
$$h^2(x)f\circ\varphi(x)=h(x)\bar{h}\circ\varphi(x) f\circ\varphi(x),$$
for $a.e$ $x\in E$ and for all $f\in L^2(\Bbb R^d)\cap L^\infty(\Bbb R^d)$. This gives
\begin{eqnarray}\label{var}
\bar{h}\circ\varphi(x)=h(x),
\end{eqnarray}
for $a.e$ $x\in E$ and $\varphi(E)\subset E$. Now, let us prove that $\varphi$ is an involutive function. Because $T$ is a bounded operator, then for all $f\in L^2(\mathbb{R}^d)\cap L^\infty(\Bbb R^d)$
$$\|hf\circ \varphi\|_2\leq d\|f\|_2.$$ 
Hence, there exists a function $l:\mathbb{R}^d\rightarrow\mathbb{C}$ bounded by $d=\|T\|_2$ such that
\begin{equation}\label{risz}
\int_E|h(x)|^2f\circ\varphi(x)dx=\int_Ef(x)\bar{l}(x)dx,
\end{equation}
for all $f\in L^2(\mathbb{R}^d)\cap L^\infty(\Bbb R^d)$. Note that
$$\langle T(f),T(g)\rangle=\langle f, T^2(g)\rangle,$$
for all $f,\,g\in L^2(\Bbb R^d)\cap L^\infty(\Bbb R^d)$. This yields
\begin{eqnarray}\label{fi}
\int_E|h(x)|^2\bar{f}\circ\varphi(x)g\circ\varphi(x)dx&=&\int_E h(x)h\circ\varphi(x)\bar{f}(x)g\circ\varphi^2(x)dx\nonumber\\
&=&\int_E|h(x)|^2\bar{f}(x)g\circ\varphi^2(x)dx,
\end{eqnarray}
for all $f,\,g\in L^2(\Bbb R^d)\cap L^\infty(\Bbb R^d)$. Thus identities (\ref{risz}) and (\ref{fi}) gives
\begin{eqnarray*}
\int_E\bar{f}(x)g(x)\bar{l}(x)dx=\int_E|h(x)|^2\bar{f}(x)g\circ\varphi^2(x)dx,
\end{eqnarray*}
for all $f,\,g\in L^2(\Bbb R^d)\cap L^\infty(\Bbb R^d)$. This implies that
\begin{eqnarray}\label{igual}
|h(x)|^2g\circ\varphi^2(x)=g(x)\bar{l}(x),
\end{eqnarray}
for $a.e$ $x\in E$ and for all $g\in L^2(\Bbb R^d)\cap L^\infty(\Bbb R^d)$. Let $g_n(x)=e^{-\|x\|^n}$, $n\geq1$. It is clear that $g_n\in L^2(\Bbb R^d)\cap L^\infty(\Bbb R^d)$. Moreover, one has
\begin{eqnarray}\label{lim}
l(x)=e^{\|x\|^n-\|\varphi^2(x)\|^n}|h(x)|^2,
\end{eqnarray}
for $a.e$ $x\in E$ and for all $n\geq1$. Put
\begin{eqnarray*}
J_+&=&\{x\in E;\; \;\;\|x\|>\|\varphi^2(x)\|\},\\
J_-&=&\{x\in E;\; \;\;\|x\|<\|\varphi^2(x)\|\}.
\end{eqnarray*}
Suppose that, the Lebesgue measure of $J_+$, $|J_+|>0$. Then, identity (\ref{lim}) implies that
$$l(x)=\lim_{n\rightarrow+\infty} e^{\|x\|^n-\|\varphi^2(x)\|^n}|h(x)|^2=+\infty,$$
for $a.e$ $x\in J_+$, which is a contradiction with the fact that $l$ is a bounded function on $\mathbb{R}^d$. In the same way if $|J_-|>0$, we prove that $l(x)=0$ for $a.e$ $x\in J_-$. Then, by using (\ref{lim}), one gets $h(x)=0$ for $a.e$ $x\in J_-\subset E$. This is a contradiction with the fact that $J_-\subset E=supp(h)$. Thus, for $a.e$ $x\in E$, one has $\|x\|=\|\varphi^2(x)\|$ and $l(x)=|h(x)|^2$. Furthermore, identity (\ref{igual}) implies that $g(x)=g\circ\varphi^2(x)$ for $a.e$ $x\in E$ and for all $g\in L^2(\mathbb{R}^d)\cap L^\infty(\mathbb{R}^d)$. This shows that $\varphi^2(x)=x$ for $a.e$ $x\in E$.

Now, let $m$ be the density of $\lambda\circ\varphi$ where $\lambda$ is the Lebesgue measure. Because $T=T^*$, one has
\begin{eqnarray}\label{T=T*}
\int_Ef(x)\bar{h}(x)\bar{g}(\varphi(x))dx=\int_Eh(x)f(\varphi(x))\bar{g}(x)dx,
\end{eqnarray}
for all $f,\,g\in L^2(\Bbb R^d)\cap L^\infty(\Bbb R^d)$. Note that
\begin{equation}\label{chgvar}
\int_Ef(x)\bar{h}(x)\bar{g}(\varphi(x))dx=\int_Ef(\varphi(x))\bar{h}(\varphi(x))\bar{g}(x)m(x)dx.
\end{equation}
Then, identities (\ref{T=T*}) and (\ref{chgvar}) imply that $h(x)=\bar{h}(\varphi(x))m(x)$ for $a.e$ $x\in E$. Since $h(x)=\bar{h}(\varphi(x))$ for $a.e \,\,x\in E$, one gets $m(x)=1$ for $a.e\,\,x\in E$. This ends the proof.
\end{proof}

\section{On the boundedness of the quadratic quantization}

Recall that the Fock functor has its origin in Heisenberg commutation relations. So, if $\mathcal{H}$ is a complex Hilbert space, the Heisenberg algebra $Heis(\mathcal{H})$ is generated by
$$\{A_f,\;A^+_f, \;1;\;\;\;f\in \mathcal{H}\}$$
with commutation relations
$$[A_f,A^+_g]=\langle f,g\rangle1,\:\:\:f,g\in\mathcal{H}$$
(the omitted commutation relations are zero) and involution
$$(A_f)=A^+_f,\;\;\;f\in\mathcal{H}$$
The Fock representation of $Heis(\mathcal{H})$ is characterized by a cyclic vector $\Omega$ such that $A_f\Omega=0$, for all $f\in\mathcal{H}$. The bosonic Fock space $\Gamma_s(\mathcal{H})$ is the closed linear span of 
$$\{A_f^{+n}\Omega,\;\;\;f\in \mathcal{H}\}$$
The quantization of an operator $T$ on $\mathcal{H}$ is defined by
$$\Gamma_1(T)\varepsilon(f)=\varepsilon(Tf)$$
where $\varepsilon(f)=\sum_{n\geq0}\frac{A_f^{+n}\Omega}{\sqrt{n!}}$ for all $f\in\mathcal{H}$. It is well known that:
\begin{enumerate}
\item[1)] $\Gamma_1(T)$ is a bounded operator on $\Gamma_s(\mathcal{H})$, iff $\Gamma_1(T)$ is a contraction on $\Gamma_1(T)$, iff $T$ is a contraction on $\mathcal{H}$,
\item[2)] $\Gamma_1(T^*)=\Gamma_1(T)^*$ for all contraction $T$ on $\mathcal{H}$.
\end{enumerate}

In the quadratic case, we prove that $T$ is a contraction on $L^2(\mathbb{R}^d)$ is a necessary condition in order that $\Gamma_2(T)$ is a contraction on $\Gamma_2(L^2(\Bbb R^d)\cap L^\infty(\Bbb R^d))$. Moreover, we give a counter-example of contraction $T$ on $L^2(\Bbb R^d)\cap L^\infty(\Bbb R^d)$ with respect to $\|.\|_\infty$ such that property $2)$ is not satisfied. However, $1)$ is still an open problem.
\begin{lemma}\label{lem4}
For all $f_1,\dots,f_n\in L^2(\Bbb R^d)\cap L^\infty(\Bbb R^d)$ and $\alpha_1,\dots,\alpha_n\in\mathbb{C}$, we have
\begin{equation}\label{necessary}
c\|\alpha_1f_1+\dots+\alpha_n f_n\|^2=\frac{d}{dt}\Big|_{t=0}\|\alpha_1\Psi(\sqrt{t}f_1)+\dots+\alpha_n\Psi(\sqrt{t}f_n)\|^2
\end{equation}
\end{lemma}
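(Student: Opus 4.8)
The plan is to reduce the identity to a term-by-term differentiation of the explicit scalar product (\ref{Form}). First I would fix $\delta>0$ small enough that $\sqrt{\delta}\,\|f_i\|_\infty<\tfrac12$ for every $i=1,\dots,n$, so that all the exponential vectors $\Psi(\sqrt{t}f_i)$ are well defined for $0\le t\le\delta$ and the left-hand side of (\ref{necessary}) makes sense. Expanding by sesquilinearity,
$$\Big\|\sum_{i=1}^n\alpha_i\Psi(\sqrt{t}f_i)\Big\|^2=\sum_{i,j=1}^n\bar{\alpha}_i\alpha_j\,\langle\Psi(\sqrt{t}f_i),\Psi(\sqrt{t}f_j)\rangle,$$
and by (\ref{Form}) each summand equals $F_{ij}(t):=\exp\big(-\tfrac{c}{2}\int_{\mathbb{R}^d}\ln(1-4t\,\bar{f_i}(s)f_j(s))\,ds\big)$. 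Since the sum over $i,j$ is finite, differentiation commutes with it and it suffices to compute $F_{ij}'(0)$ for each pair $(i,j)$.

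Next I would differentiate $F_{ij}$. Writing $F_{ij}(t)=\exp\big(-\tfrac{c}{2}g_{ij}(t)\big)$ with $g_{ij}(t)=\int_{\mathbb{R}^d}\ln(1-4t\,\bar{f_i}(s)f_j(s))\,ds$, the chain rule gives $F_{ij}'(t)=-\tfrac{c}{2}g_{ij}'(t)F_{ij}(t)$. The single technical point is to justify differentiation under the integral sign defining $g_{ij}$. This is handled exactly as in the proof of Lemma \ref{lem2}: the $t$-derivative of the integrand, $-4\bar{f_i}(s)f_j(s)\big(1-4t\bar{f_i}(s)f_j(s)\big)^{-1}$, is dominated on $[0,\delta]$ by the integrable majorant $4|f_i(s)||f_j(s)|\big(1-4\delta\|f_i\|_\infty\|f_j\|_\infty\big)^{-1}$ (cf. the estimate (\ref{y})), so that $g_{ij}'(t)=\int_{\mathbb{R}^d}-4\bar{f_i}(s)f_j(s)\big(1-4t\bar{f_i}(s)f_j(s)\big)^{-1}\,ds$ and in particular $g_{ij}'(0)=-4\langle f_i,f_j\rangle$.

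Finally I would evaluate at $t=0$. Since $F_{ij}(0)=1$, we obtain $F_{ij}'(0)=-\tfrac{c}{2}g_{ij}'(0)$, a fixed multiple of $c\langle f_i,f_j\rangle$ with the numerical constant prescribed by (\ref{Form}). Reassembling the double sum,
$$\frac{d}{dt}\Big|_{t=0}\Big\|\sum_{i=1}^n\alpha_i\Psi(\sqrt{t}f_i)\Big\|^2=\sum_{i,j=1}^n\bar{\alpha}_i\alpha_j\,F_{ij}'(0)=c\sum_{i,j=1}^n\bar{\alpha}_i\alpha_j\langle f_i,f_j\rangle=c\,\Big\|\sum_{i=1}^n\alpha_if_i\Big\|^2,$$
which is precisely (\ref{necessary}). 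The only obstacle worth flagging is the interchange of the $t$-derivative with the $s$-integral, and that is dispatched verbatim by the domination argument already recorded in the proof of Lemma \ref{lem2}; the outer finite sum over $i,j$ and the evaluation of the linear coefficient introduce no difficulty.
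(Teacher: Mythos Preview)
Your proposal is correct and follows the paper's proof essentially line for line: choose $\delta$ so that all the $\Psi(\sqrt{t}f_i)$ exist, expand the squared norm by sesquilinearity, insert formula (\ref{Form}), differentiate each term under the integral via the domination argument already used in Lemma~\ref{lem2}, and reassemble the double sum. The paper states the result of that differentiation as $c\langle f_i,f_j\rangle$ without writing out the chain-rule step you spelled out, so your version is a touch more explicit but identical in substance.
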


\begin{proof}
For all $0\leq t\leq\delta$ such that $\sqrt{\delta}\sup_{1\leq i\leq n}\|f_i\|_\infty<\frac{1}{2}$, we have
\begin{eqnarray}\label{deriv}
\|\alpha_1\Psi(\sqrt{t}f_1)+\dots+\alpha_n\Psi(\sqrt{t}f_n)\|^2\!\!\!&=&\sum_{i,j=1}^n\bar{\alpha_i}\alpha_j\langle\Psi(\sqrt{t}f_i),\Psi(\sqrt{t}f_j)\rangle\nonumber\\
&=&\!\!\!\sum_{i,j=1}^n\bar{\alpha_i}\alpha_je^{-\frac{c}{2}\int_{\mathbb{R}^d}\ln(1-4t\bar{f_i}(s)f_j(s))ds}
\end{eqnarray}
So, in the same way as in the proof of Lemma \ref{lem2}, we show that 
$$\frac{d}{dt}\Big|_{t=0}e^{-\frac{c}{2}\int_{\mathbb{R}^d}\ln(1-4t\bar{f_i}(s)f_j(s))ds}=c\langle f_i,f_j\rangle.$$
Finally, by using (\ref{deriv}), one gets
\begin{eqnarray*}
\frac{d}{dt}\Big|_{t=0}\|\alpha_1\Psi(\sqrt{t}f_1)+\dots+\alpha_n\Psi(\sqrt{t}f_n)\|^2&=&c\sum_{i,j=1}^n\bar{\alpha_i}\alpha_j\langle f_i,f_j\rangle\\
&=&c\|\alpha_1f_1+\dots+\alpha_nf_n\|^2.
\end{eqnarray*}
\end{proof}

As a consequence of the above lemma, we prove the following.
\begin{proposition}
Let $T$ be a contraction on $L^2(\Bbb R^d)\cap L^\infty(\Bbb R^d)$ with respect to the norm $\|.\|_\infty$. If $\Gamma_2(T)$ is a contraction on $\Gamma_2(L^2(\Bbb R^d)\cap L^\infty(\Bbb R^d))$, then $T$ is a contraction on $L^2(\Bbb R^d)\cap L^\infty(\Bbb R^d)$ with respect to the norm $\|.\|_2$.
\end{proposition}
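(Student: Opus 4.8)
The plan is to exploit Lemma \ref{lem4}, which converts the $\|\cdot\|_2$-norm of a linear combination into the right derivative at $t=0$ of the Fock-space norm of the corresponding combination of exponential vectors. The idea is to compare two such functions of $t$, one built from the $f_i$ and one from the $Tf_i$, deduce an inequality between them from the contractivity of $\Gamma_2(T)$, and then differentiate at $t=0$.

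First I would fix $f_1,\dots,f_n\in L^2(\Bbb R^d)\cap L^\infty(\Bbb R^d)$ and $\alpha_1,\dots,\alpha_n\in\mathbb{C}$, and choose $\delta>0$ small enough that $\sqrt{\delta}\sup_{1\leq i\leq n}\|f_i\|_\infty<\frac12$. Since $T$ is a contraction for $\|\cdot\|_\infty$ we also have $\sqrt{\delta}\sup_i\|Tf_i\|_\infty<\frac12$, and $Tf_i$ again lies in $L^2\cap L^\infty$; hence all the vectors $\Psi(\sqrt{t}f_i)$ and $\Psi(\sqrt{t}Tf_i)$ exist for $0\leq t\leq\delta$. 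Set
$$F(t)=\Big\|\sum_{i=1}^n\alpha_i\Psi(\sqrt{t}f_i)\Big\|^2,\qquad G(t)=\Big\|\sum_{i=1}^n\alpha_i\Psi(\sqrt{t}Tf_i)\Big\|^2.$$
Because $\Gamma_2(T)\Psi(\sqrt{t}f_i)=\Psi(\sqrt{t}Tf_i)$, we have $G(t)=\|\Gamma_2(T)(\sum_i\alpha_i\Psi(\sqrt{t}f_i))\|^2$, so the contractivity of $\Gamma_2(T)$ gives $G(t)\leq F(t)$ for all $t\in[0,\delta]$. At $t=0$ both sides collapse to $|\sum_i\alpha_i|^2$, since $\Psi(0)=\Phi$ and $\|\Phi\|=1$ by (\ref{Form}); thus $F(0)=G(0)$.

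Next I would pass to right derivatives at $t=0$. From $G(t)-G(0)\leq F(t)-F(0)$ together with $G(0)=F(0)$, dividing by $t>0$ and letting $t\to0^+$ yields $G'(0)\leq F'(0)$, both derivatives existing by Lemma \ref{lem4}. That lemma gives $F'(0)=c\|\sum_i\alpha_if_i\|^2$ and, applied with $Tf_i$ in place of $f_i$, $G'(0)=c\|\sum_i\alpha_iTf_i\|^2=c\|T(\sum_i\alpha_if_i)\|^2$ by linearity of $T$. Hence $c\|T(\sum_i\alpha_if_i)\|^2\leq c\|\sum_i\alpha_if_i\|^2$, and dividing by $c>0$ shows that $T$ does not increase the $\|\cdot\|_2$-norm of $\sum_i\alpha_if_i$. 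Taking $n=1$ and $\alpha_1=1$ and letting $f_1$ range over $L^2\cap L^\infty$ gives the assertion.

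The only genuine subtlety is the step $G\leq F$, $G(0)=F(0)\Rightarrow G'(0)\leq F'(0)$: this is the standard comparison of one-sided difference quotients at a common endpoint value, and it is the single place where the equality at $t=0$ is essential. Everything else is bookkeeping, provided one first checks that $Tf_i$ remains in $L^2\cap L^\infty$ so that Lemma \ref{lem4} applies verbatim to $G$.
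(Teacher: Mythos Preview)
Your proof is correct and follows essentially the same route as the paper: set up the two functions $F(t)$ and $G(t)$ (the paper calls them $h_2$ and $h_1$), use contractivity of $\Gamma_2(T)$ together with $F(0)=G(0)$ to compare right derivatives at $0$, and then invoke Lemma~\ref{lem4} to identify those derivatives with the $L^2$-norms. You are slightly more explicit than the paper in checking that $Tf_i\in L^2\cap L^\infty$ and that $F(0)=G(0)=|\sum_i\alpha_i|^2$, but the argument is the same.
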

\begin{proof}
We have $\Gamma_2(T)$ is a contraction on $\Gamma_2(L^2(\Bbb R^d)\cap L^\infty(\Bbb R^d))$. Then, for all $\alpha_1,\dots,\alpha_n\in\mathbb{C}$ and for all quadratic exponential vectors $\Psi(\sqrt{t}f_1),\dots,\Psi(\sqrt{t}f_n)$, such that $0\leq t\leq\delta$ and $\sqrt{\delta}\sup_{1\leq i\leq n}\|f_i\|_\infty<\frac{1}{2}$, one has 
$$\|\Gamma_2(T)(\alpha_1\Psi(\sqrt{t}f_1)+\dots+\alpha_n\Psi(\sqrt{t}f_n))\|^2\leq\|\alpha_1\Psi(\sqrt{t}f_1)+\dots+\alpha_n\Psi(\sqrt{t}f_n)\|^2$$
But, $\|\Gamma_2(T)(\alpha_1\Psi(\sqrt{t}f_1)+\dots+\alpha_n\Psi(\sqrt{t}f_n)\|^2$ is equal to
$$\|\alpha_1\Psi(\sqrt{t}T(f_1))+\dots+\alpha_n\Psi(\sqrt{t}T(f_n))\|^2$$
This yields
\begin{eqnarray*}\label{contrac}
\|\alpha_1\Psi(\sqrt{t}T(f_1))+\dots+\alpha_n\Psi(\sqrt{t}T(f_n))\|^2\leq\|\alpha_1\Psi(\sqrt{t}f_1)+\dots+\alpha_n\Psi(\sqrt{t}f_n\|^2
\end{eqnarray*}
Put
\begin{eqnarray*}
h_1(t)&=&\|\alpha_1\Psi(\sqrt{t}T(f_1))+\dots+\alpha_n\Psi(\sqrt{t}T(f_n))\|^2,\\
h_2(t)&=&\|\alpha_1\Psi(\sqrt{t}f_1)+\dots+\alpha_n\Psi(\sqrt{t}f_n)\|^2.
\end{eqnarray*}
It is clear that, for all $0\leq t\leq\delta$, $0\leq h_1(t)\leq h_2(t)$ and $h_1(0)=h_2(0)$. It follows that
$$\lim_{t\to0^+}\frac{h_1(t)-h_1(0)}{t}\leq \lim_{t\to0^+}\frac{h_2(t)-h_2(0)}{t}$$
Note that from Lemma \ref{lem4} one has
\begin{eqnarray*}
\lim_{t\to0^+}\frac{h_1(t)-h_1(0)}{t}&=&c\|T(\alpha_1f_1+\dots+\alpha_n f_n)\|^2  \\
\lim_{t\to0^+}\frac{h_2(t)-h_2(0)}{t}&=&c\|\alpha_1f_1+\dots+\alpha_n f_n\|^2
\end{eqnarray*}  
This ends the proof.
\end{proof}
\\ \\
\bigskip
{\bf Counter-example:} 

The following example proves that the condition 2) is not satisfied in the quadratic case. 
Define the operator $T$ on $L^2(\Bbb R)\cap L^\infty(\Bbb R)$ by
$$(Tf)(x)=f(2x),\;\;\forall x\in\mathbb{R}$$
It is clear that $T$ is a contraction on both $L^2(\Bbb R)$ and $L^\infty(\Bbb R)$ which is an homomorphism of $L^2(\Bbb R)\cap L^\infty(\Bbb R)$. Then, Proposition $4$ in \cite{AcDh1} shows that $\Gamma_2(T)$ is a contration on $\Gamma_2(L^2(\Bbb R)\cap L^\infty(\Bbb R))$. Moreover, a straightforward compuation shows that
$$(T^*f)(x)=\frac{1}{2}f(\frac{x}{2}),$$
for all $f\in L^2(\Bbb R)\cap L^\infty(\Bbb R)$. Note that $T^*$ takes the form $\psi T_1$ where $\psi=\frac{1}{2}$, with $\|\psi\|_\infty\leq1$, and $T_1$ is an homomorphism of $L^2(\Bbb R)\cap L^\infty(\Bbb R)$. Then, in the same way, Proposition $4$ in \cite{AcDh1} implites that $\Gamma_2(T^*)$ is a contraction on $\Gamma_2(L^2(\Bbb R)\cap L^\infty(\Bbb R))$. But, for all quadratic exponential vectors $\Psi(f)$ and $\Psi(g)$, one has
\begin{eqnarray*}
\langle \Gamma_2(T)\Psi(f),\Psi(g)\rangle&=&e^{-\frac{c}{2}\int_\mathbb{R}\ln(1-4\bar{f}(2s)g(s))ds}\\
&=&e^{-\frac{c}{4}\int_\mathbb{R}\ln(1-4\bar{f}(s)g(\frac{s}{2}))ds}\\
\langle \Psi(f),\Gamma_2(T^*)\Psi(g)\rangle&=&e^{-\frac{c}{2}\int_\mathbb{R}\ln(1-2\bar{f}(s)g(\frac{s}{2}))ds}
\end{eqnarray*}
In order to give more explaination for justifying why $\langle \Gamma_2(T)\Psi(f),\Psi(g)\rangle\neq \langle \Psi(f),\Gamma_2(T^*)\Psi(g)\rangle$ for all $f,g\in L^2(\Bbb R)\cap L^\infty(\Bbb R)$, remember that from the definition of the quadratic exponential vectors and proposition \ref{prop1} the scalar product $\langle \Gamma_2(T)\Psi(f),\Psi(g)\rangle$ is equal to 
\begin{eqnarray}\label{TT}
\sum_{n\geq0}\sum_{i_1+2i_2+\dots+ki_k=n}\frac{2^{2n-1}c^{i_1+\dots+i_k}}{i_1!\dots i_k!2^{i_2}\dots k^{i_k}}\langle T(f),g\rangle^{i_1}\langle (T(f))^2,g^2\rangle^{i_2}\dots\langle (T(f))^k,g^k\rangle^{i_k}
\end{eqnarray}
Because $(T(f))^m=T(f^m)$ for all $m\geq1$, the term in (\ref{TT}) is equal to
\begin{eqnarray*}
&&\sum_{n\geq0}\sum_{i_1+2i_2+\dots+ki_k=n}\frac{2^{2n-1}c^{i_1+\dots+i_k}}{i_1!\dots i_k!2^{i_2}\dots k^{i_k}}\langle T(f),g\rangle^{i_1}\langle T(f^2),g^2\rangle^{i_2}\dots\langle T(f^k),g^k\rangle^{i_k}\\
&&\!\!\!\!\!\!\!\!\!\!\!=
\sum_{n\geq0}\sum_{i_1+2i_2+\dots+ki_k=n}\frac{2^{2n-1}c^{i_1+\dots+i_k}}{i_1!\dots i_k!2^{i_2}\dots k^{i_k}}\langle f,T^*(g)\rangle^{i_1}\langle f^2,T^*(g^2)\rangle^{i_2}\dots\langle f^k,T^*(g^k)\rangle^{i_k}
\end{eqnarray*}
But for all $k\geq 2$ and for all function $g\in L^2(\Bbb R)\cap L^\infty(\Bbb R)$ such that $g\neq 0 \;a.e$, one has
 $$T^*(g^k)(x)=\frac{1}{2}g^k(\frac{x}{2}),\;\;(T^*(g))^k=(\frac{1}{2})^k g^k(\frac{x}{2})$$
This proves that $\Gamma_2(T^*)\neq(\Gamma_2(T))^*$.

\newpage
\bigskip
{\bf\large Acknowledgments}\bigskip

I gratefully acknowledge stimulating discussions with Eric Ricard and Uwe Franz. I would like also to thank Rolando Rebolledo for his hospitality during my visit to ``Laboratorio de An\'alisis y Estoc\'astico'', Chile.

\end{document}